\newtheorem{theorem}{Theorem}[section]
\newtheorem{corollary}[theorem]{Corollary}
\numberwithin{equation}{section}
\theoremstyle{remark}
\def\({\left(}
\def \){ \right)}
\def\S{\mathbb{S}}
\newcommand{\wh}{\widehat}
\newcommand\calF{F}
\newcommand\calH{H}
\renewcommand\>{\rangle}
\begin{document}

\title[Pitt inequality and logarithmic uncertainty principle]
{Sharp Pitt inequality and logarithmic uncertainty principle for Dunkl
transform in $L^{2}$}

\author{D.~V.~Gorbachev}
\address{D.~Gorbachev, Tula State University,
Department of Applied Mathematics and Computer Science,
300012 Tula, Russia}
\email{dvgmail@mail.ru}

\author{V.~I.~Ivanov}
\address{V.~Ivanov, Tula State University,
Department of Applied Mathematics and Computer Science,
300012 Tula, Russia}
\email{ivaleryi@mail.ru}

\author{S.~Yu.~Tikhonov}
\address{S. Tikhonov, ICREA, Centre de Recerca Matem\`{a}tica, and UAB\\
Campus de Bellaterra, Edifici~C
08193 Bellaterra (Barcelona), Spain.}
\email{stikhonov@crm.cat}

\date{\today}
\keywords{Dunkl transform, Pitt inequality, sharp constants, uncertainty
principle} \subjclass{42B10, 33C45, 33C52}

\thanks{The first and the second authors were partially supported by RFFI N\,13-01-00045, Ministry of education and science of Russian Federation
(N\,5414GZ, N\,1.1333.2014K), and Dmitry Zimin's Dynasty Foundation.
The third author was partially supported by MTM 2011-27637, 2014 SGR 289, and RFFI 13-01-00043.}

\begin{abstract}
We prove sharp Pitt's inequality for the Dunkl transform in $L^{2}(\mathbb{R}^{d})$
with the corresponding weights. As an application, we obtain the logarithmic
uncertainty principle for the Dunkl transform.
\end{abstract}

\maketitle

\section{Introduction}

Let $\Gamma(t)$ be the gamma function, $\mathbb{R}^{d}$ be the real space of
$d$ dimensions, equipped with a scalar product $\<x,y\>$ and a norm
$|x|=\sqrt{\<x,x\>}$. Denote by $\mathcal{S}(\mathbb{R}^{d})$ the Schwartz
space on $\mathbb{R}^{d}$ and by $L^{2}(\mathbb{R}^{d})$ the Hilbert space of
complex-valued functions endowed with a norm
$\|f\|_{2}=\left(\int_{\mathbb{R}^{d}}|f(x)|^{2}\,dx\right)^{1/2}$. The Fourier
transform is defined~by
\[
\widehat{f}(y)=(2\pi)^{-d/2}\int_{\mathbb{R}^{d}}f(x)e^{-i\<x,y\>}\,dx.
\]

W.~Beckner~\cite{Bec} proved the Pitt inequality for the Fourier transform
\begin{equation}\label{eq1.1}
\bigl\||y|^{-\beta}\widehat{f}(y)\bigr\|_{2}\le
C(\beta)\bigl\||x|^{\beta}f(x)\bigr\|_{2},\quad f\in
\mathcal{S}(\mathbb{R}^{d}),\qquad 0<\beta<d/2,
\end{equation}
with sharp constant
\[
C(\beta)=2^{-\beta}\,\frac{\Gamma(\frac{1}{2}(\frac{d}{2}-\beta))}{\Gamma(\frac{1}{2}(\frac{d}{2}+\beta))}.
\]
Noting that
$\big\|(-\Delta)^{\beta/2}f(x)\big\|_{2}=\big\||x|^{\beta}\wh{f}(x)\big\|_{2}$, Pitt's
inequality can be viewed as a Hardy--Rellich inequality
$\big\||x|^{-\beta}f(x)\big\|_{2}\le C(\beta)\big\|(-\Delta)^{\beta/2}f(x)\big\|_{2}$; see the papers by D.~Yafaev~\cite{Yaf1} and
S.~Eilertsen~\cite{Eil} for alternative proofs and extensions of \eqref{eq1.1}.

For $\beta=0$, \eqref{eq1.1} is the Plancherel theorem. If $\beta>0$ there is
no extremiser in inequality \eqref{eq1.1} and its sharpness can be obtained on
the set of radial functions.

The proof of \eqref{eq1.1} in~\cite{Bec} is based on an equivalent integral
realization as a Stein--Weiss fractional integral on $\mathbb{R}^d$. D.~Yafaev
in \cite{Yaf1} used the following decomposition (see \cite[Chapt.~IV]{SteWei})
\begin{equation}\label{eq1.2}
L^{2}(\mathbb{R}^{d})=\sum_{n=0}^{\infty}\oplus \mathcal{R}_{n}^{d},
\end{equation}
where $\mathcal{R}_{0}^{d}$ is the space of radial function, and
$\mathcal{R}_{n}^{d}=\mathcal{R}_{0}^{d}\otimes \mathcal{H}_{n}^{d}$ is the
space of functions in $\mathbb{R}^d$ that are products of radial functions and
spherical harmonics of degree $n$. Thanks to this decomposition it is enough to
study inequality \eqref{eq1.1} on the subsets of $\mathcal{R}_{n}^{d}$ which
are invariant under the Fourier transform.

In this paper, following \cite{Yaf1} and using similar decomposition of the
space $L^{2}(\mathbb{R}^{d})$ with the Dunkl weight, we prove sharp Pitt's
inequality for the Dunkl transform.

Let $R\subset \mathbb{R}^{d}$ be a root system, $R_{+}$ be the positive
subsystem of $R$, and $k\colon R\to \mathbb{R}_{+}$ be a multiplicity function
with the property that $k$ is $G$-invariant. Here $G(R)\subset O(d)$ is a
finite reflection group generated by reflections $\{\sigma_{a}\colon a\in R\}$,
where $\sigma_{a}$ is a reflection with respect to a hyperplane $\<a,x\>=0$.

Throughout this paper we let
\[
v_{k}(x)=\prod_{a\in R_{+}}|\<a,x\>|^{2k(a)}
\]
denote the Dunkl weight. Moreover, $d\mu_{k}(x)=c_{k}v_{k}(x)\,dx$, where
\[
c_{k}^{-1}=\int_{\mathbb{R}^{d}}e^{-|x|^{2}/2}v_{k}(x)\,dx
\]
is the Macdonald--Mehta--Selberg integral. Let $L^{2}(\mathbb{R}^{d},d\mu_{k})$
be the Hilbert space of complex-valued functions endowed with a norm
$\|f\|_{2,d\mu_{k}}=\left(\int_{\mathbb{R}^{d}}|f(x)|^{2}\,d\mu_{k}(x)\right)^{1/2}$.

Introduced by C.~F.~Dunkl, a family of differential-difference operators
(Dunkl's operators) associated with $G$ and $k$ are given by
\[
D_{j}f(x)=\frac{\partial f(x)}{\partial x_{j}}+ \sum_{a\in
R_{+}}k(a)\<a,e_{j}\>\,\frac{f(x)-f(\sigma_{a}x)}{\<a,x\>},\quad j=1,\ldots,d,
\]
The Dunkl kernel $e_{k}(x, y)=E_{k}(x, iy)$ is the unique solution of
the joint eigenvalue problem for the corresponding Dunkl operators:
\[
D_{j}f(x)=iy_{j}f(x),\quad j=1,\ldots,d,\qquad f(0)=1.
\]
Let us define the Dunkl transforms as follows
\[
\calF_{k}(f)(y)=\int_{\mathbb{R}^{d}}f(x)\overline{e_{k}(x,
y)}\,d\mu_{k}(x), \quad \calF_{k}^{-1}(f)(x)=\calF_{k}(f)(-x),
\]
where $\calF_{k}(f)$ and $\calF_{k}^{-1}(f)$ are the direct and inverse
transforms correspondingly (see, e.g., \cite{Ros}). For $k\equiv 0$ we have
$\calF_{0}(f)=\widehat{f}$.

Our goal is to study Pitt's inequality for the Dunkl transform
\begin{equation}\label{eq1.3}
\||y|^{-\beta}\calF_{k}(f)(y)\|_{2,d\mu_{k}}\le
C(\beta,k)\||x|^{\beta}f(x)\|_{2,d\mu_{k}},\quad f\in \mathcal{S}(\mathbb{R}^{d}),
\end{equation}
with the sharp constant $C(\beta,k)$.

Let us first recall some known results on Pitt's inequality for the Hankel transform.
Let $\lambda\ge -1/2$. Denote by $J_{\lambda}(t)$ the Bessel function of degree
$\lambda$ and by
$j_{\lambda}(t)=2^{\lambda}\Gamma(\lambda+1)t^{-\lambda}J_{\lambda}(t)$
the normalized Bessel function. Setting
\[
b_{\lambda}=\left(\int_{0}^{\infty}e^{-t^{2}/2}t^{2\lambda+1}\,dt\right)^{-1}=
\frac{1}{2^{\lambda}\Gamma(\lambda+1)}
\] and
$d\nu_{\lambda}(r)=b_{\lambda}r^{2\lambda+1}\,dr$, we define
$\|f\|_{2,d\nu_{\lambda}}=\left(\int_{\mathbb{R}_{+}}|f(r)|^{2}\,d\nu_{\lambda}(r)\right)^{1/2}$.

The Hankel transform is defined by
\[
\calH_{\lambda}(f)(\rho)=\int_{\mathbb{R}_{+}}f(r)j_{\lambda}(\rho
r)\,d\nu_{\lambda}(r).
\]
Note that $\calH_{\lambda}^{-1}=\calH_{\lambda}$ (see~\cite{Car,Col,Erd}).
Pitt's inequality for the Hankel transform is written as
\begin{equation}\label{eq1.4}
\|\rho^{-\beta}\calH_{\lambda}(f)(\rho)\|_{2,d\nu_\lambda}\le
c(\beta,\lambda)\|r^{\beta}f(r)\|_{2,d\nu_\lambda},\quad f\in
\mathcal{S}(\mathbb{R}_{+}),
\end{equation}
where $c(\beta,\lambda)$ is the sharp constant in (\ref{eq1.4}) and
$\mathcal{S}(\mathbb{R}_{+})$ is the the Schwartz space on~$\mathbb{R}_{+}$.
Note that if $f\in \mathcal{R}_{0}^{d}$, a study of the Hankel transform is of
special interest since the Fourier transform of a radial function can be
written as the Hankel transform.

L.~De~Carli~\cite{Car} proved that $c(\beta,\lambda)$ is finite only if $0\le
\beta<\lambda+1$. In \cite{Car,Ca} it was also studied the sharp constant in
$(L^{p}, L^{q})$ Pitt's inequality for the Hankel transform of type
\eqref{eq1.4} in the case of $1\le p\le q\le \infty$.

For $\lambda=d/2-1$, $d\in\mathbb{N}$, the constant $c(\beta,\lambda)$ was
calculated by D.~Yafaev~\cite{Yaf1}, and in the general case by
S.~Omri~\cite{Omr}. The proof of Pitt's inequality in~\cite{Omr} is rather
technical and uses the Stein--Weiss type estimate for the so-called B-Riesz
potential operator. Following~\cite{Yaf1}, we give a direct and simple proof of
inequality \eqref{eq1.4} in Section 2.

Let $|k|=\sum_{a\in R_{+}}k(a)$ and $\lambda_{k}= d/2-1+|k|$. For a radial
function $f(r)$, $r=|x|$, Pitt's inequality for the Dunkl transform
\eqref{eq1.3} corresponds to Pitt's inequality for the Hankel transform
\eqref{eq1.4} with $\lambda=\lambda_{k}$. Therefore the condition
\begin{equation}\label{eq1.5}
0\le \beta<\lambda_{k}+1
\end{equation}
is necessary for $C(\beta,k)<\infty$. Our goal is to show that in fact
$C(\beta,k)=c(\beta,\lambda_{k})$ if condition \eqref{eq1.5} holds. This will
be proved in Section 3. Moreover, in Section 4, we use Pitt's inequality for
the Dunkl transform to obtain the logarithmic uncertainty principle
(see~\cite{Bec,Omr,Sol1}). It is worth mentioning that different uncertainty
principles for the Dunkl transform have been recently studied in, e.g.,
\cite{gh,ka,rosler,sh}.

Note that for the one-dimensional Dunkl weight
\[
v_{\lambda}(t)=|t|^{2\lambda+1}, \quad
d\mu_{\lambda}(t)=\frac{v_{\lambda}(t)\,dt}{2^{\lambda+1}\Gamma(\lambda+1)},
\quad \lambda\ge -1/2,
\]
and the corresponding Dunkl transform
\[
\calF_{\lambda}(f)(s)=
\int_{\mathbb{R}}f(t)\overline{e_{\lambda}(st)}\,|t|^{2\lambda+1}\,d\mu_{\lambda}(t),\quad
e_{\lambda}(t)=j_{\lambda}(t)-ij_{\lambda}'(t),
\]
F.~Soltani~\cite{Sol1}
proved Pitt's inequality that can be equivalently written as
 \begin{equation}\label{eq1.6}
\||s|^{-\beta}\calF_{\lambda}(f)(s)\|_{2,d\mu_\lambda}\le \max
\left\{c(\beta,\lambda),c(\beta,\lambda+1)\right\}\||t|^{\beta}f(t)\|_{2,d\mu_\lambda}
\end{equation}
for $f\in \mathcal{S}(\mathbb{R})$ and $0\le \beta<\lambda+1$. Since
$c(\beta,\lambda)\ge c(\beta,\lambda+1)$ (see~\cite{Yaf1}), then in fact
\eqref{eq1.6} holds with the constant $c(\beta,\lambda)$ and therefore, we have
in this case $C(\beta,k)=c(\beta,\lambda_{k})$.

Finally, we remark that Pitt's inequality in $L^{2}$ for the multi-dimensional
Dunkl transform has been recently established in \cite{Sol2} in the case of
$\lambda_{k}-1/2<\beta<\lambda_{k}+1$. The obtained constant is not sharp.

\section{Pitt's inequality for Hankel transform}\label{s2}

\begin{theorem}\label{t1}
Let $\lambda\ge -1/2$ and $0\le\beta<\lambda+1$, then for $f\in
\mathcal{S}(\mathbb{R}_{+})$ inequality \eqref{eq1.4} holds with the sharp
constant
\[
c(\beta,\lambda)=2^{-\beta}\,\frac{\Gamma(\frac{1}{2}(\lambda+1-\beta))}{\Gamma(\frac{1}{2}(\lambda+1+\beta))}.
\]
\end{theorem}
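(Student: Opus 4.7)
My plan follows the Mellin-transform approach announced by Yafaev \cite{Yaf1}: diagonalize the Hankel transform via the Mellin transform, apply Mellin--Parseval to both sides of \eqref{eq1.4}, and reduce the inequality to a sharp pointwise bound for a ratio of Gamma functions along a vertical line.

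First I recall the Mellin--Parseval identity: writing $\tilde{g}(s)=\int_{0}^{\infty}g(r)r^{s-1}\,dr$, one has
\[
\int_{0}^{\infty}|g(r)|^{2}r^{2\alpha-1}\,dr=\frac{1}{2\pi}\int_{-\infty}^{\infty}|\tilde{g}(\alpha+it)|^{2}\,dt
\]
for $\alpha$ in the strip of analyticity. The two sides of \eqref{eq1.4}, squared, are of this form (up to the common factor $b_{\lambda}$) with $\alpha=\lambda+1-\beta$ on the left (applied to $\calH_{\lambda}f$) and $\alpha=\lambda+1+\beta$ on the right (applied to $f$); the hypothesis $0\le\beta<\lambda+1$ is precisely what places both lines in the admissible strip for $f\in\mathcal{S}(\mathbb{R}_{+})$.

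Next I compute the Mellin symbol of $\calH_{\lambda}$. Using the classical identity $\int_{0}^{\infty}J_{\lambda}(r)r^{s-1}\,dr=2^{s-1}\Gamma((s+\lambda)/2)/\Gamma((\lambda-s)/2+1)$, the definition of $j_{\lambda}$, the value $b_{\lambda}=(2^{\lambda}\Gamma(\lambda+1))^{-1}$, and Fubini, I find
\[
\widetilde{\calH_{\lambda}f}(s)=2^{s-\lambda-1}\,\frac{\Gamma(s/2)}{\Gamma(\lambda+1-s/2)}\,\tilde{f}(2\lambda+2-s).
\]
Specializing to $s=\lambda+1-\beta+it$, applying Parseval to both sides of \eqref{eq1.4}, changing the integration variable $t\mapsto-t$ in the resulting left-hand integral and using $|\Gamma(\bar z)|=|\Gamma(z)|$, the inequality becomes equivalent to the pointwise bound
\[
4^{-\beta}\,\bigg|\frac{\Gamma((\lambda+1-\beta+it)/2)}{\Gamma((\lambda+1+\beta+it)/2)}\bigg|^{2}\le c(\beta,\lambda)^{2},\qquad t\in\mathbb{R},
\]
with equality at $t=0$.

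Finally, this bound follows from the Weierstrass product $|\Gamma(a+it)|^{2}=\Gamma(a)^{2}\prod_{n=0}^{\infty}(1+t^{2}/(a+n)^{2})^{-1}$: for $0<a<b$,
\[
\frac{|\Gamma(a+it)|^{2}}{|\Gamma(b+it)|^{2}}=\frac{\Gamma(a)^{2}}{\Gamma(b)^{2}}\prod_{n=0}^{\infty}\frac{1+t^{2}/(b+n)^{2}}{1+t^{2}/(a+n)^{2}}\le\frac{\Gamma(a)^{2}}{\Gamma(b)^{2}},
\]
each factor being at most $1$. Choosing $a=(\lambda+1-\beta)/2$ and $b=(\lambda+1+\beta)/2$ (both positive under $0<\beta<\lambda+1$) yields exactly $c(\beta,\lambda)^{2}$ on the right, and sharpness follows since the bound is saturated at $t=0$ and one may take Schwartz $f$ whose Mellin transform concentrates near $t=0$; the case $\beta=0$ reduces to Plancherel. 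The main technical obstacle I anticipate is not the algebra but the careful justification of the Mellin manipulations---in particular, Fubini for the conditionally integrable kernel $j_{\lambda}(\rho r)$---which requires the Schwartz decay of $f$ and the strict strip condition $0\le\beta<\lambda+1$ to ensure absolute convergence on the relevant vertical lines.
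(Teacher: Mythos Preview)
Your proof is correct and follows essentially the same Mellin-diagonalization route as the paper: both reduce \eqref{eq1.4} to the statement that the supremum of $2^{-\beta}\bigl|\Gamma\bigl(\tfrac12(\lambda+1-\beta+i\eta)\bigr)/\Gamma\bigl(\tfrac12(\lambda+1+\beta+i\eta)\bigr)\bigr|$ over $\eta\in\mathbb{R}$ is attained at $\eta=0$. The only difference is packaging---the paper first substitutes $g(r)=f(r)r^{\beta+\lambda+1/2}$ to obtain an unweighted $L^{2}(\mathbb{R}_{+})$ operator with kernel $J_{\lambda}(\rho r)(\rho r)^{1/2-\beta}$ and then cites \cite{Yaf1,Yaf2} for both the Mellin-multiplier norm formula and the location of the maximum, whereas you apply Mellin--Parseval directly in the weighted spaces and supply a self-contained proof of the maximum via the Weierstrass product for $|\Gamma(a+it)|^{2}$.
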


\begin{proof}
For $\beta=0$ we have $c(\beta,\lambda)=1$ and \eqref{eq1.4} becomes Plancherel's identity
\[
\|\calH_{\lambda}(f)(\rho)\|_{2,d\nu_{\lambda}}=\|f(r)\|_{2,d\nu_{\lambda}}.
\]

Let $\beta>0$. Setting $g(r)=f(r)r^{\beta+\lambda+1/2}$
in~\eqref{eq1.4}, we arrive at
\[
\left(\int_{0}^{\infty}\left|\int_{0}^{\infty}g(r)J_{\lambda}(\rho r)(\rho
r)^{1/2-\beta}\,dr\right|^{2}\,d\rho\right)^{1/2}\le
c(\beta,\lambda)\left(\int_{0}^{\infty}|g(r)|^{2}\,dr\right)^{1/2}.
\]
Hence, $c(\beta,\lambda)$ is the norm of the integral operator
\[
A_{\lambda}g(\rho)=\int_{0}^{\infty}a_{\lambda}(\rho r)g(r)\,dr\colon
L^{2}(\mathbb{R}_{+})\to L^{2}(\mathbb{R}_{+}),
\]
where $a_{\lambda}(t)=J_{\lambda}(t)t^{1/2-\beta}$.

By \cite[Sect.~7.7]{BatErd}, the Mellin transform of the function
$a_{\lambda}(\cdot)$ for $0<\beta<\lambda+1$ and $\eta\in \mathbb{R}$ is given
by
\begin{align*}
Ma_{\lambda}(\eta)=\int_{0}^{\infty}a(t)t^{-1/2-i\eta}\,dt&=\int_{0}^{\infty}J_{\lambda}(t)t^{-\beta-i\eta}\,dt
\\
&=2^{-\beta-i\eta}\,\frac{\Gamma(\frac{1}{2}(\lambda+1-\beta-i\eta))}{\Gamma(\frac{1}{2}(\lambda+1+\beta+i\eta))}.
\end{align*}
Basic properties of the gamma function imply that $Ma_{\lambda}(\eta)$ is continuous and
\[
Ma_{\lambda}(0)=2^{-\beta}\,\frac{\Gamma(\frac{1}{2}(\lambda+1-\beta))}{\Gamma(\frac{1}{2}(\lambda+1+\beta))},\qquad
|Ma_{\lambda}(\eta)|\sim 2^{-\beta}|\eta|^{-\beta}\to 0,\quad \eta\to \infty.
\]
In \cite{Yaf1,Yaf2} it was proved that
\[
\|A_{\lambda}\|=\max_{\mathbb{R}}|Ma_{\lambda}(\eta)|=Ma_{\lambda}(0)
\]
and that the norm is not attained; that is to say, there is no function $g$
such that $\|A_{\lambda}\|=\|A_{\lambda}g\|_{L^{2}(\mathbb{R}_{+})}$ with
$\|g\|_{ L^{2}(\mathbb{R}_{+})}=1$.
\end{proof}

\begin{corollary}\label{c1}
If $\lambda_{k}= d/2-1+|k|$ and $0\le\beta<\lambda_{k}+1$, then for $f\in
\mathcal{S}(\mathbb{R}^{d})\cap \mathcal{R}_{0}^{d}$ Pitt's inequlity for the
Dunkl transform \eqref{eq1.3} holds with sharp constant
 $c(\beta,\lambda_{k})$.
\end{corollary}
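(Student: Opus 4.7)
The plan is to reduce Pitt's inequality \eqref{eq1.3} for radial functions to Pitt's inequality \eqref{eq1.4} for the Hankel transform with index $\lambda=\lambda_k$, and then to invoke Theorem~\ref{t1}.

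First I would pass to polar coordinates to convert the $L^{2}(d\mu_k)$-norm of a radial function into a one-dimensional norm against $d\nu_{\lambda_k}$. Writing $v_k(x)=|x|^{2|k|}v_k(x/|x|)$ and using $2|k|+d-1=2\lambda_k+1$, for any $g$ depending only on $r=|x|$ one has
\[
\int_{\mathbb{R}^d}|g(|x|)|^{2}\,d\mu_k(x)=c_k\Bigl(\int_{\mathbb{S}^{d-1}}v_k(\xi)\,d\sigma(\xi)\Bigr)\int_{0}^{\infty}|g(r)|^{2}r^{2\lambda_k+1}\,dr.
\]
Taking $g(r)=e^{-r^{2}/2}$ and comparing with the definition of $c_k$ (the Macdonald--Mehta--Selberg integral) forces $c_k\int_{\mathbb{S}^{d-1}}v_k(\xi)\,d\sigma(\xi)=b_{\lambda_k}$. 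Hence on radial functions the norms $\|\cdot\|_{2,d\mu_k}$ and $\|\cdot\|_{2,d\nu_{\lambda_k}}$ agree.

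Next I would appeal to the Bochner-type identity for the Dunkl transform (see \cite{Ros}): if $f(x)=f_0(|x|)$ is a radial Schwartz function, then $\calF_k(f)$ is radial with radial profile $\calH_{\lambda_k}(f_0)$. The underlying fact is that
\[
c_k\int_{\mathbb{S}^{d-1}}E_k(r\xi,iy)\,v_k(\xi)\,d\sigma(\xi)=b_{\lambda_k}\,j_{\lambda_k}(r|y|),
\]
which can be derived either from the intertwining property of the Dunkl Laplacian (which acts on radial functions as the Bessel operator of index $\lambda_k$) or from the known positive integral representation of $E_k$. Combined with the first step, this identifies both sides of \eqref{eq1.3} for radial $f$ with the corresponding sides of \eqref{eq1.4} at $\lambda=\lambda_k$:
\[
\||y|^{-\beta}\calF_k(f)(y)\|_{2,d\mu_k}=\|\rho^{-\beta}\calH_{\lambda_k}(f_0)(\rho)\|_{2,d\nu_{\lambda_k}},\quad \||x|^{\beta}f(x)\|_{2,d\mu_k}=\|r^{\beta}f_0(r)\|_{2,d\nu_{\lambda_k}}.
\]

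Applying Theorem~\ref{t1} with $\lambda=\lambda_k$ then yields \eqref{eq1.3} with the desired constant $c(\beta,\lambda_k)$. For sharpness, any extremizing sequence $\{g_n\}\subset\mathcal{S}(\mathbb{R}_+)$ for the Hankel Pitt inequality lifts to the radial Schwartz functions $f_n(x)=g_n(|x|)$ on $\mathbb{R}^d$, which by the above identifications extremizes \eqref{eq1.3} within $\mathcal{R}_0^d$. The only step that requires care is the Bochner identity above, but this is well known for the Dunkl transform and is not the real content of the corollary.
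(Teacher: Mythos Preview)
Your proposal is correct and is essentially the argument the paper has in mind: the corollary is stated without proof, relying on the well-known reduction that for radial $f$ the Dunkl transform becomes the Hankel transform $\calH_{\lambda_k}$ of the radial profile and the $L^{2}(d\mu_k)$-norm coincides with the $L^{2}(d\nu_{\lambda_k})$-norm (cf.\ \eqref{eq3.1} and the $n=0$ case of \eqref{eq3.3} later in the paper). Your normalization check via the Gaussian and your appeal to Theorem~\ref{t1} with $\lambda=\lambda_k$ are exactly the intended steps, and your remark on sharpness via lifting an extremizing sequence matches the paper's comment that sharpness is seen on radial functions.
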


\vskip 0.5cm
\section{Pitt's inequality for Dunkl transform}\label{s3}

Let $\S^{d-1}$ be the unit sphere in $\mathbb{R}^{d}$, $x'\in \S^{d-1}$, and
$dx'$ be the Lebesgue measure on the sphere. Set
$a_{k}^{-1}=\int_{\S^{d-1}}v_{k}(x')\,dx'$,
$d\omega_{k}(x')=a_{k}v_{k}(x')\,dx'$, and
$\|f\|_{2,d\omega_{k}}=\left(\int_{\S^{d-1}}|f(x')|^{2}\,d\omega_{k}(x')\right)^{1/2}.$
We have
\begin{equation}\label{eq3.1}
c_{k}^{-1}=\int_{\mathbb{R}^{d}}e^{-|x|^{2}/2}v_{k}(x)\,dx=
\int_{0}^{\infty}e^{-r^{2}/2}r^{d-1+2|k|}\,dr\int_{\S^{d-1}}v_{k}(x')\,dx'=
b_{\lambda_{k}}^{-1}a_{k}^{-1}.
\end{equation}

Let us denote by $\mathcal{H}_{n}^{d}(v_{k})$ the subspace of $k$-spherical
harmonics of degree $n\in \mathbb{Z}_{+}$ in $L^{2} (\S^{d-1},d\omega_{k})$
(see \cite[Chap.~5]{DunkXu}). Let $\Delta_{k}=\sum_{j=1}^{d}D_{j}^{2}$ be the
Dunkl Laplacian and $\mathcal{P}_{n}^{d}$ be the space of homogeneous
polynomials of degree $n$ in $\mathbb{R}^{d}$. Then
$\mathcal{H}_{n}^{d}(v_{k})$ is the restriction of $\ker \Delta_{k}\cap
\mathcal{P}_{n}^{d}$ to the sphere $\S^{d-1}$.

If $l_{n}$ is the dimension of
$\mathcal{H}_{n}^{d}(v_{k})$,
we denote by
 $\{Y_{n}^{j}\colon j=1,\ldots,l_{n}\}$ the real-valued orthonormal basis
 $\mathcal{H}_{n}^{d}(v_{k})$ in $L^{2}(\S^{d-1},d\omega_{k})$.
 A union of these bases forms an orthonormal basis in
$L^{2}(\S^{d-1},d\omega_{k})$ consisting of
$k$-spherical harmonics, i.e., we have
\begin{equation}\label{eq3.2}
L^{2}(\S^{d-1},d\omega_{k})=\sum_{n=0}^{\infty}\oplus
\mathcal{H}_{n}^{d}(v_{k}).
\end{equation}

Using \eqref{eq3.2} and the following Funk-Hecke formula for $k$-spherical harmonic
 $Y\in \mathcal{H}_{n}^{d}(v_{k})$ (see \cite{Xu})
\begin{equation}\label{eq3.3}
\int_{\S^{d-1}}Y(y')\overline{e_{k}(x,y')}\,d\omega_{k}(y')=
\frac{(-i)^{n}\Gamma(\lambda_{k}+1)}{2^{n}\Gamma(n+\lambda_{k}+1)}\,Y(x')r^{n}
j_{n+\lambda_{k}}(r),\quad x=rx'\in \mathbb{R}^{d},
\end{equation}
 similarly to \eqref{eq1.2} we have
 the
 direct sum decomposition of $L^{2}(\mathbb{R}^{d},d\mu_{k})$:
\begin{equation}\label{eq3.4}
L^{2}(\mathbb{R}^{d},d\mu_{k})=\sum_{n=0}^{\infty}\oplus
\mathcal{R}_{n}^{d}(v_{k}),\quad
\mathcal{R}_{n}^{d}(v_{k})=\mathcal{R}_{0}^{d}\otimes
\mathcal{H}_{n}^{d}(v_{k}),
\end{equation}
and that the space $\mathcal{R}_{n}^{d}(v_{k})$ is invariant under the Dunkl
transform. An example of the orthogonal basis in
$L^{2}(\mathbb{R}^{d},d\mu_{k})$ consisting of eigenfunctions of the Dunkl
transform was constructed in \cite{DunkXu}.

The next result provides a sharp constant in the Pitt inequality for the Dunkl
transform \eqref{eq1.3}.
\begin{theorem}\label{t2}
Let $\lambda_{k}= d/2-1+|k|$ and $0\le\beta<\lambda_{k}+1$, then for $f\in
\mathcal{S}(\mathbb{R}^{d})$ we~have
\[
C(\beta,k)=2^{-\beta}\,\frac{\Gamma(\frac{1}{2}(\lambda_{k}+1-\beta))}{\Gamma(\frac{1}{2}(\lambda_{k}+1+\beta))}.
\]
Sharpness of this inequality can be seen by considering radial functions.
\end{theorem}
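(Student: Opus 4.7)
The plan is to use decomposition \eqref{eq3.4} to reduce the multidimensional problem to one-dimensional Hankel Pitt inequalities and then invoke Theorem~\ref{t1}. For $f\in \mathcal{S}(\mathbb{R}^{d})$ expand $f=\sum_{n\ge 0}f_{n}$ with $f_{n}\in \mathcal{R}_{n}^{d}(v_{k})$. Since the summands are mutually orthogonal in $L^{2}(\mathbb{R}^{d},d\mu_{k})$, since each $\mathcal{R}_{n}^{d}(v_{k})$ is preserved by $\calF_{k}$, and since multiplication by a radial function (in particular by $|x|^{\beta}$ or $|y|^{-\beta}$) also preserves the decomposition, a Parseval-type argument reduces \eqref{eq1.3} to proving it on each $\mathcal{R}_{n}^{d}(v_{k})$ separately, with the overall best constant equal to $\sup_{n\ge 0}c_{n}$, where $c_{n}$ denotes the best constant on the $n$-th shell.

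Fix $n$ and write $f_{n}(x)=f_{0}(r)Y(x')$ with $Y\in \mathcal{H}_{n}^{d}(v_{k})$. Passing to polar coordinates and using the Funk--Hecke identity \eqref{eq3.3} together with the scaling relation $e_{k}(rx',y)=e_{k}(x',ry)$ for the Dunkl kernel, a short calculation yields
\[
\calF_{k}(f_{n})(\rho y')=\frac{(-i)^{n}\Gamma(\lambda_{k}+1)}{2^{n}\Gamma(n+\lambda_{k}+1)}\,Y(y')\,\rho^{n}\,b_{\lambda_{k}}\int_{0}^{\infty}f_{0}(r)\,r^{n+2\lambda_{k}+1}\,j_{n+\lambda_{k}}(r\rho)\,dr.
\]
Setting $\tilde{f}(r)=r^{-n}f_{0}(r)$ and extracting the factor $\rho^{n}$ from the left-hand side identifies the map $f_{0}\mapsto \calF_{k}(f_{n})$ with the Hankel transform $\calH_{n+\lambda_{k}}$ acting on $\tilde{f}$, up to a scalar prefactor; using $b_{\lambda}=2^{-\lambda}/\Gamma(\lambda+1)$ this prefactor turns out to have modulus exactly one. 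A direct change of variables then shows that the weighted norms $\||x|^{\beta}f_{n}\|_{2,d\mu_{k}}$ and $\||y|^{-\beta}\calF_{k}(f_{n})\|_{2,d\mu_{k}}$ equal, up to the common factor $\|Y\|_{2,d\omega_{k}}$, the norms $\|r^{\beta}\tilde{f}\|_{2,d\nu_{n+\lambda_{k}}}$ and $\|\rho^{-\beta}\calH_{n+\lambda_{k}}\tilde{f}\|_{2,d\nu_{n+\lambda_{k}}}$. Theorem~\ref{t1} at index $n+\lambda_{k}$ therefore gives $c_{n}=c(\beta,n+\lambda_{k})$.

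The last step is to prove $\sup_{n\ge 0}c(\beta,n+\lambda_{k})=c(\beta,\lambda_{k})$, i.e.\ that $\lambda\mapsto c(\beta,\lambda)$ is decreasing for $\beta>0$. The logarithmic derivative in $\lambda$ of the ratio $\Gamma(\tfrac{\lambda+1-\beta}{2})/\Gamma(\tfrac{\lambda+1+\beta}{2})$ equals $\tfrac{1}{2}[\psi(\tfrac{\lambda+1-\beta}{2})-\psi(\tfrac{\lambda+1+\beta}{2})]$, which is strictly negative by monotonicity of the digamma function. Hence $C(\beta,k)\le c(\beta,\lambda_{k})$; the reverse inequality and the non-attainment both follow from Corollary~\ref{c1}, since the sharp constant on the radial shell $\mathcal{R}_{0}^{d}(v_{k})$ is already $c(\beta,\lambda_{k})$ and is not attained.

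The main obstacle is the accounting in the Funk--Hecke reduction: the normalisation $c_{k}=b_{\lambda_{k}}a_{k}$ from \eqref{eq3.1}, the Funk--Hecke prefactor, the substitution $f_{0}\leftrightarrow \tilde{f}$, and the shift of Hankel index from $\lambda_{k}$ to $n+\lambda_{k}$ must combine so that the effective one-dimensional operator is exactly $\calH_{n+\lambda_{k}}$ with unit scalar. Once this reduction is clean, Theorem~\ref{t1} and elementary gamma-ratio monotonicity deliver the sharp constant.
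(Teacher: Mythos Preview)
Your proposal is correct and follows essentially the same route as the paper: decompose $L^{2}(\mathbb{R}^{d},d\mu_{k})$ via \eqref{eq3.4}, apply the Funk--Hecke formula \eqref{eq3.3} on each shell to reduce to the Hankel Pitt inequality at index $\lambda_{k}+n$, and then use that $c(\beta,\lambda_{k}+n)$ is decreasing in $n$ together with Corollary~\ref{c1} for sharpness. The only notable difference is that the paper cites \cite{Yaf1} for the monotonicity of $\lambda\mapsto c(\beta,\lambda)$, whereas you supply the short digamma argument directly; the bookkeeping with the normalisations $b_{\lambda}$ and the Funk--Hecke prefactor that you flag as the ``main obstacle'' is handled in the paper by the explicit substitution \eqref{eq3.7}$\to$\eqref{eq3.8}, which is exactly your $f_{0}\leftrightarrow\tilde{f}$ step.
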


\begin{proof}
For $\beta=0$ we have $C(\beta,k)=1$ and Pitt's inequality
\eqref{eq1.3} becomes Plancherel's identity
\[
\|\calF_{k}(f)(y)\|_{2,d\mu_{k}}=\|f(x)\|_{2,d\mu_{k}}.
\]

Let $0<\beta<\lambda_{k}+1$. If $f\in \mathcal{S}(\mathbb{R}^{d})$, then
\begin{align*}
&f_{nj}(r)=\int_{\S^{d-1}}f(rx')Y_{n}^{j}(x')\,d\omega_{k}(x')\in \mathcal{S}(\mathbb{R}_{+}),
\\
&f(rx')=\sum_{n=0}^{\infty}\sum_{j=1}^{l_{n}}f_{nj}(r)Y_{n}^{j}(x'),
\\
&\int_{\S^{d-1}}|f(rx')|^{2}\,d\omega_{k}(x')=\sum_{n=0}^{\infty}\sum_{j=1}^{l_{n}}|f_{nj}(r)|^{2}.
\end{align*}
Using spherical coordinates, decomposition of $L^{2}(\mathbb{R}^{d},d\mu_{k})$
\eqref{eq3.4}, formulas \eqref{eq3.1} and \eqref{eq3.3}, and the property
$e_{k}(tx,y)=e_{k}(ty,x)$, we get that
\begin{equation}\label{eq3.5}
\begin{aligned}[t]
\int_{\mathbb{R}^{d}}|x|^{2\beta}|f(x)|^{2}\,d\mu_{k}(x)
&=b_{\lambda_{k}}\int_{0}^{\infty}r^{2\beta+d-1+2|k|}
\int_{\S^{d-1}}|f(rx')|^{2}\,d\omega_{k}(x')\,dr
\\
&=b_{\lambda_{k}}\int_{0}^{\infty}r^{2\beta+d-1+2|k|}
\sum_{n=0}^{\infty}\sum_{j=1}^{l_{n}}|f_{nj}(r)|^{2}\,dr
\\
&=\sum_{n=0}^{\infty}\sum_{j=1}^{l_{n}}\int_{0}^{\infty}|f_{nj}(r)|^{2}r^{2\beta}\,d\nu_{\lambda_{k}}(r),
\end{aligned}
\end{equation}
\begin{equation*}
\begin{aligned}
\calF_{k}(f)(y)&=\int_{\mathbb{R}^{d}}f(x)\overline{e_{k}(x,
y)}\,d\mu_{k}(x)= b_{\lambda_{k}}\int_{0}^{\infty}r^{d-1+2|k|}
\int_{\S^{d-1}}f(rx')\,d\omega_{k}(x')\,dr
\\
&=\sum_{n=0}^{\infty}\sum_{j=1}^{l_{n}}b_{\lambda_{k}}\int_{0}^{\infty}f_{nj}(r)r^{2\lambda_{k}+1}
\int_{\S^{d-1}}Y_{n}^{j}(x')\overline{e_{k}(rx',\rho y')}\,d\omega_{k}(x')\,dr
\\
&=\sum_{n=0}^{\infty}\sum_{j=1}^{l_{n}}\frac{(-i)^{n}\Gamma(\lambda_{k}+1)}{2^{n}\Gamma(n+\lambda_{k}+1)}\,Y_{n}^{j}(y')
\int_{0}^{\infty}f_{nj}(r)j_{n+\lambda_{k}}(\rho r)(\rho r)^{n}\,d\nu_{\lambda_{k}}(r),
\end{aligned}
\end{equation*}
and
\begin{multline}\label{eq3.6}
\int_{\mathbb{R}^{d}}|y|^{-2\beta}|\calF_{k}(f)(y)|^{2}\,d\mu_{k}(y)=
\sum_{n=0}^{\infty}\sum_{j=1}^{l_{n}}\frac{\Gamma^{2}(\lambda_{k}+1)}{2^{2n}\Gamma^{2}(n+\lambda_{k}+1)}
\\
\times
\int_{0}^{\infty}\left|\int_{0}^{\infty}f_{nj}(r)j_{n+\lambda_{k}}(\rho r)(\rho
r)^{n}\,d\nu_{\lambda_{k}}(r)\right|^{2}\rho^{-2\beta}\,d\nu_{\lambda_{k}}(\rho).
\end{multline}
Suppose that
$g\in \mathcal{S}(\mathbb{R}_{+})$, $n\in \mathbb{Z}_{+}$, and
$0<\beta<\lambda_{k}+1+n$. Let us show that
\begin{multline}\label{eq3.7}
\left\|\rho^{-\beta}\int_{0}^{\infty}g(r)j_{n+\lambda_{k}}(\rho r)(\rho r)^{n}\,d\nu_{\lambda_{k}}(r)\right\|_{2,d\nu_{\lambda_{k}}}
\\
\le \frac{2^{n}\Gamma(n+\lambda_{k}+1)c(\beta,\lambda_{k}+n)}{\Gamma(\lambda_{k}+1)}\,\|r^{\beta}g(r)\|_{2,d\nu_{\lambda_{k}}},
\end{multline}
where $c(\beta,\lambda_{k}+n)$ is given in Theorem \ref{t1} with $\lambda=\lambda_{k}+n$.
Setting in \eqref{eq3.7} $g(r)=u(r)r^{n}$, we rewrite it as follows:
\begin{equation}\label{eq3.8}
\left\|\rho^{-\beta}\int_{0}^{\infty}u(r)j_{n+\lambda_{k}}\,d\nu_{\lambda_{k}+n}(r)\right\|_{2,d\nu_{\lambda_{k}+n}}
\le c(\beta,\lambda_{k}+n)\|r^{\beta}u(r)\|_{2,d\nu_{\lambda_{k}+n}},
\end{equation}
which is \eqref{eq1.4} with $\lambda=\lambda_{k}+n$.

Since $c(\beta,\lambda_{k}+n)$ is decreasing with $n$ (see \cite{Yaf1}), then
using \eqref{eq3.5}, \eqref{eq3.6}, and \eqref{eq3.7}, we arrive at
\begin{align*}
\int_{\mathbb{R}^{d}}|y|^{-2\beta}|\calF_{k}(f)(y)|^{2}\,d\mu_{k}(y)
&\le
\sum_{n=0}^{\infty}\sum_{j=1}^{l_{n}}c^{2}(\beta,\lambda_{k}+n)\int_{0}^{\infty}|f_{nj}(r)|^{2}r^{2\beta}\,d\nu_{\lambda_{k}}(r)
\\
&\le c^{2}(\beta,\lambda_{k})\sum_{n=0}^{\infty}\sum_{j=1}^{l_{n}}\int_{0}^{\infty}|f_{nj}(r)|^{2}r^{2\beta}\,d\nu_{\lambda_{k}}(r)
\\
&=c^{2}(\beta,\lambda_{k})
\int_{\mathbb{R}^{d}}|x|^{2\beta}|f(x)|^{2}\,d\mu_{k}(x).
\end{align*}
\end{proof}

In the proof of Theorem~\ref{t2} we obtained the following result (see
\eqref{eq3.8}).

\begin{corollary}\label{c2}
Let $n\in \mathbb{N}$, $\lambda_{k}= d/2-1+|k|$, and
$0\le\beta<\lambda_{k}+1+n$, then for $f\in \mathcal{S}(\mathbb{R}^{d})\cap
\mathcal{R}_{n}^{d}(v_{k})$ we have Pitt's inequality for the Dunkl
transform~\eqref{eq1.3} with sharp constant $c(\beta,\lambda_{k}+n)$.
\end{corollary}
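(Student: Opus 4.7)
The plan is that this corollary falls out of the proof of Theorem~\ref{t2} essentially for free: when $f$ lies in $\mathcal{R}_n^d(v_k)=\mathcal{R}_0^d\otimes\mathcal{H}_n^d(v_k)$, its spherical harmonic expansion collapses to a single degree, so $f(rx')=\sum_{j=1}^{l_n}f_{nj}(r)Y_n^j(x')$ with $f_{nj}\in\mathcal{S}(\mathbb{R}_+)$, and only the $n$-th block of the identities \eqref{eq3.5}, \eqref{eq3.6} contributes.

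Next, inserting this into \eqref{eq3.5} and \eqref{eq3.6} and applying \eqref{eq3.7} (equivalently \eqref{eq3.8}, which is just Pitt's inequality \eqref{eq1.4} of order $\lambda=\lambda_k+n$ established in Theorem~\ref{t1}) to each radial component $f_{nj}$, the factor $\Gamma^2(\lambda_k+1)/[2^{2n}\Gamma^2(n+\lambda_k+1)]$ appearing in \eqref{eq3.6} cancels against the square of the prefactor in \eqref{eq3.7}. Summing the resulting squared inequalities over $j=1,\dots,l_n$ produces
\[\int_{\mathbb{R}^d}|y|^{-2\beta}|\calF_k(f)(y)|^2\,d\mu_k(y)\le c^2(\beta,\lambda_k+n)\int_{\mathbb{R}^d}|x|^{2\beta}|f(x)|^2\,d\mu_k(x),\]
which is the desired inequality \eqref{eq1.3} with constant $c(\beta,\lambda_k+n)$.

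For sharpness, fix any unit-norm spherical harmonic $Y\in\mathcal{H}_n^d(v_k)$ and choose $f(rx')=u(r)r^n Y(x')$ with $u\in\mathcal{S}(\mathbb{R}_+)$. Then $f_{nj}(r)$ vanishes for all but one index, and unwinding \eqref{eq3.5}--\eqref{eq3.6} shows that \eqref{eq1.3} for this $f$ is \emph{equivalent} to \eqref{eq1.4} for $u$ with $\lambda=\lambda_k+n$. Since Theorem~\ref{t1} already asserts that $c(\beta,\lambda_k+n)$ is sharp in the Hankel Pitt inequality of that order, the same constant is sharp in Corollary~\ref{c2}. The only nontrivial bookkeeping is the $r^n$ reweighting and the cancellation of the gamma prefactor, both of which have already been carried out inside the proof of Theorem~\ref{t2}, so I do not anticipate any new technical obstacle.
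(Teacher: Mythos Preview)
Your proposal is correct and follows exactly the approach indicated by the paper: the corollary is just what remains of the proof of Theorem~\ref{t2} when only the $n$-th block survives, i.e., inequality \eqref{eq3.8}, and sharpness is inherited from Theorem~\ref{t1} via the same substitution $g(r)=u(r)r^n$. The paper itself offers no additional argument beyond pointing to \eqref{eq3.8}, so your write-up is in fact more detailed than, but identical in substance to, the original.
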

For the Fourier transform Corollary~\ref{c2} was established in~\cite{Yaf1}.

\vskip 0.5cm
\section{ Logarithmic uncertainty principle for
Dunkl transform}\label{s4}

W.~Beckner in \cite{Bec} proved the logarithmic uncertainty principle for the Fourier transform using Pitt's inequality~\eqref{eq1.1}:
if $f\in \mathcal{S}(\mathbb{R}^{d})$, then
\[
\int_{\mathbb{R}^{d}}\ln(|x|)|f(x)|^{2}\,dx+
\int_{\mathbb{R}^{d}}\ln(|y|)|\widehat{f}(y)|^{2}\,dy\ge
\left(\psi \Bigl(\frac{d}{4}\Bigr)+\ln
2\right)\int_{\mathbb{R}^{d}}|f(x)|^{2}\,dx,
\]
where $\psi(t)=d\ln \Gamma(t)/dt$ is the $\psi$-function.

For the Hankel transform the logarithmic uncertainty principle reads as follows (see \cite{Omr}): if
 $f\in \mathcal{S}(\mathbb{R}_+)$ and $\lambda\ge -1/2$, then
\begin{multline}\label{eq4.1}
\int_{\mathbb{R}_+}\ln(t)|f(t)|^{2}t^{2\lambda+1}\,dt+\int_{\mathbb{R}_+}
\ln(s)|\calH_{\lambda}(f)(s)|^{2}s^{2\lambda+1}\,ds
\\
\ge
\left(\psi \Bigl(\frac{\lambda+1}{2}\Bigr)+\ln
2\right)\int_{\mathbb{R}_+}|f(t)|^{2}t^{2\lambda+1}\,dt.
\end{multline}
For the one-dimensional Dunkl transform of functions $f\in \mathcal{S}(\mathbb{R})$, F.~Soltani~\cite{Sol1} has recently proved that
\begin{multline*}
\int_{\mathbb{R}}\ln(|t|)|f(t)|^{2}|t|^{2\lambda+1}\,dt+
\int_{\mathbb{R}}\ln(|s|)|\calF_{\lambda}(f)(s)|^{2}|s|^{2\lambda+1}\,ds
\\
\ge \left(\min \left\{\psi \Bigl(\frac{\lambda+1}{2}\Bigr),\psi
\Bigl(\frac{\lambda+2}{2}\Bigr)\right\} +\ln
2\right)\int_{\mathbb{R}}|f(t)|^{2}|t|^{2\lambda+1}\,dt.
\end{multline*}
Since $\psi$ is increasing the latter can be written as
\begin{multline}\label{eq4.2}
\int_{\mathbb{R}}\ln(|t|)|f(t)|^{2}|t|^{2\lambda+1}\,dt+
\int_{\mathbb{R}}\ln(|s|)|\calF_{\lambda}(f)(s)|^{2}|s|^{2\lambda+1}\,ds
\\
\ge \left(\psi \Bigl(\frac{\lambda+1}{2}\Bigr)+
\ln 2\right)\int_{\mathbb{R}}|f(t)|^{2}|t|^{2\lambda+1}\,dt,
\end{multline}
which is the logarithmic uncertainty principle for the one-dimensional Dunkl
transform.

Using Pitt's inequality \eqref{eq1.3} we obtain the logarithmic uncertainty
principle the multi-dimensional Dunkl transform.

\begin{theorem}\label{t3}
Let $\lambda_{k}= d/2-1+|k|$ and $f\in \mathcal{S}(\mathbb{R}^{d})$.
We have
\begin{multline}\label{eq4.3}
\int_{\mathbb{R}^{d}}\ln(|x|)|f(x)|^{2}\,d\mu_{k}(x)+\int_{\mathbb{R}^{d}}\ln(|y|)|\calF_{k}(f)(y)|^{2}\,d\mu_{k}(y)
\\
\ge \left(\psi \Bigl(\frac{\lambda_{k}+1}{2}\Bigr)+\ln
2\right)\int_{\mathbb{R}^{d}}|f(x)|^{2}\,d\mu_{k}(x).
\end{multline}
\end{theorem}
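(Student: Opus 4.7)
The plan is to derive the logarithmic uncertainty principle from Pitt's inequality (Theorem~\ref{t2}) by differentiating at $\beta=0$, a standard trick already used in \cite{Bec} and \cite{Omr}. Set
\[
F(\beta)=C(\beta,k)^{2}\int_{\mathbb{R}^{d}}|x|^{2\beta}|f(x)|^{2}\,d\mu_{k}(x)
-\int_{\mathbb{R}^{d}}|y|^{-2\beta}|\calF_{k}(f)(y)|^{2}\,d\mu_{k}(y),
\]
for $\beta\in[0,\lambda_{k}+1)$. By Theorem~\ref{t2}, $F(\beta)\ge 0$ for all admissible $\beta$, and by Plancherel's identity (the $\beta=0$ case in the proof of Theorem~\ref{t2}) together with $C(0,k)=1$ we have $F(0)=0$. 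Consequently the right derivative at $0$ satisfies $F'(0^{+})\ge 0$, and this inequality will be exactly \eqref{eq4.3} after I compute the three derivatives.

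First I would compute $\frac{d}{d\beta}C(\beta,k)^{2}$ at $\beta=0$. Taking logarithms,
\[
\ln C(\beta,k)=-\beta\ln 2+\ln\Gamma\Bl(\tf{\lambda_{k}+1-\beta}{2}\Br)
-\ln\Gamma\Bl(\tf{\lambda_{k}+1+\beta}{2}\Br),
\]
so differentiating and using $C(0,k)=1$ yields $\tf{d}{d\beta}C(\beta,k)^{2}\big|_{\beta=0}=-2\ln 2-2\psi\bl(\tf{\lambda_{k}+1}{2}\br)$. Second, because $f\in\mathcal{S}(\mathbb{R}^{d})$ (and similarly $\calF_{k}(f)$ is Schwartz, so that we have sufficient decay and no issue of integrability of $|\cdot|^{2\beta}\ln|\cdot||f|^{2}$ uniformly in a neighborhood of $\beta=0$), differentiation under the integral sign is justified, giving
\[
\tf{d}{d\beta}\int_{\mathbb{R}^{d}}|x|^{2\beta}|f(x)|^{2}\,d\mu_{k}(x)\Big|_{\beta=0}
=2\int_{\mathbb{R}^{d}}\ln|x|\,|f(x)|^{2}\,d\mu_{k}(x),
\]
and analogously the $y$-integral contributes $-2\int \ln|y||\calF_{k}(f)(y)|^{2}\,d\mu_{k}(y)$.

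Combining these three contributions, the condition $F'(0^{+})\ge 0$ reads
\[
-2\Bl(\ln 2+\psi\Bl(\tf{\lambda_{k}+1}{2}\Br)\Br)\|f\|_{2,d\mu_{k}}^{2}
+2\int_{\mathbb{R}^{d}}\ln|x|\,|f|^{2}\,d\mu_{k}
+2\int_{\mathbb{R}^{d}}\ln|y|\,|\calF_{k}(f)|^{2}\,d\mu_{k}\ge 0,
\]
which upon dividing by $2$ is precisely \eqref{eq4.3}. I expect the only non-trivial technical point to be the justification of differentiation under the integral sign; this is the step I would treat most carefully, using the Schwartz decay of $f$ and of $\calF_{k}(f)$ (the latter follows from the fact that $\calF_{k}$ preserves $\mathcal{S}(\mathbb{R}^{d})$) together with the integrability of $v_{k}$-weights on compact sets, which makes $|x|^{2\beta}|f|^{2}\ln|x|$ dominated locally uniformly in $\beta$ near $0$. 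Once this is granted, the whole argument reduces to the $\psi$-function identity for the logarithmic derivative of the Gamma function.
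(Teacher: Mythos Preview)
Your proof is correct and follows essentially the same route as the paper: define an auxiliary function from the squared Pitt inequality, observe that it is nonnegative and vanishes at $\beta=0$, and read off \eqref{eq4.3} from the sign of its one-sided derivative at the origin. The only cosmetic difference is a reparametrization (the paper works with $|x|^{\beta}$, $|y|^{-\beta}$ and the constant $c^{2}(\beta/2,\lambda_{k})$, i.e.\ your $F(\beta/2)$ up to sign), and the paper spells out the local integrability of $|\!\ln(|x|)|\,|x|^{\beta}v_{k}(x)$ near the origin to justify differentiating under the integral, which is precisely the technical point you flag.
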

\begin{proof}
We write the Pitt inequality~\eqref{eq1.3} in the following form
 \[
\int_{\mathbb{R}^{d}}|y|^{-\beta}|\calF_{k}(f)(y)|^{2}\,d\mu_{k}(y)\le
c^{2}(\beta/2,
\lambda_{k})\int_{\mathbb{R}^{d}}|x|^{\beta}|f(x)|^{2}\,d\mu_{k}(x),\quad 0\le
\beta<2(\lambda_{k}+1).
\]
For $\beta\in \left(-2(\lambda_{k}+1),2(\lambda_{k}+1)\right)$ define the function
\[
\varphi
(\beta)=\int_{\mathbb{R}^{d}}|y|^{-\beta}|\calF_{k}(f)(y)|^{2}\,d\mu_{k}(y)-
c^{2}(\beta/2,\lambda_{k})\int_{\mathbb{R}^{d}}|x|^{\beta}|f(x)|^{2}\,d\mu_{k}(x).
\]
Since $|\beta|<2(\lambda_{k}+1)$ and $f,\,\calF_{k}(f)\in
\mathcal{S}(\mathbb{R}^{d})$, then
\[
\int_{|x|\le 1}|\!\ln(|x|)||x|^{\beta}v_{k}(x)\,dx=
\int_{0}^{1}|\!\ln(r)|r^{\beta+2\lambda_{k}+1}\,dr\int_{\S^{d-1}}v_{k}(x')\,dx'<\infty,
\]
which gives
\[
|y|^{-\beta}\ln(|y|)|\calF_{k}(f)(y)|^{2}v_{k}(y)\in L^{1}(\mathbb{R}^{d})\quad
\text{and}\quad \ln(|x|)|x|^{\beta}|f(x)|^{2}v_{k}(x)\in L^{1}(\mathbb{R}^{d}).
\]
Therefore,
\begin{multline}\label{eq4.4}
\varphi'(\beta)=-\int_{\mathbb{R}^{d}}|y|^{-\beta}\ln(|y|)|\calF_{k}(f)(y)|^{2}\,d\mu_{k}(y)
\\
-c^{2}(\beta/2,\lambda_{k})\int_{\mathbb{R}^{d}}|x|^{\beta}\ln(|x|)|f(x)|^{2}\,d\mu_{k}(x)
\\
-\frac{dc^{2}(\beta/2,\lambda_{k})}{d\beta}\int_{\mathbb{R}^{d}}|x|^{\beta}|f(x)|^{2}\,d\mu_{k}(x).
\end{multline}
Pitt's inequality and Plancherel's theorem imply that $\varphi (\beta)\le 0$ for $\beta>0$ and
$\varphi (0)=0$ correspondingly, hence
\[
\varphi'(0_{+})=\lim_{\beta\to 0_{+}}\frac{\varphi
(\beta)-\varphi(0)}{\beta}\le 0.
\]
Noting that
\begin{equation}\label{eq4.5}
-\frac{dc^{2}(\beta/2,\lambda_{k})}{d\beta}\biggr|_{\beta=0}=
\psi \Bigl(\frac{\lambda_{k}+1}{2}\Bigr)+\ln 2,
\end{equation}
we conclude that proof of \eqref{eq4.3} combining
 \eqref{eq4.4} and \eqref{eq4.5}.
\end{proof}

\smallbreak
For the radial functions inequality \eqref{eq4.3} with $\lambda_{k}=\lambda$
implies inequality~\eqref{eq4.1}.

\end{document}